\newtheorem{theorem}{Theorem}[section]
\newtheorem{lemma}[theorem]{Lemma}
\newtheorem{proposition}[theorem]{Proposition}
\newtheorem{corollary}[theorem]{Corollary}
\theoremstyle{definition}
\newtheorem{definition}[theorem]{Definition}
\newtheorem{example}[theorem]{Example}
\theoremstyle{remark}
\newtheorem{remark}[theorem]{Remark}
\numberwithin{equation}{section}
\def\EE{{\mathcal{E}}}
\def\FF{\mathcal{F}}
\begin{document}

\title{Killing transform on regular Dirichlet subspaces}

%    Information for first author
\author{ Liping Li}
%    Address of record for the research reported here
\address{School of Mathematical Sciences, Fudan University, Shanghai 200433, China.}
%    Current address
%\curraddr{Department of Mathematics and Statistics,
%Case Western Reserve University, Cleveland, Ohio 43403}
\email{lipingli10@fudan.edu.cn}
%    \thanks will become a 1st page footnote.
%\thanks{For thanks.}

%    Information for second author
\author{Jiangang Ying}
\address{School of Mathematical Sciences, Fudan University, Shanghai 200433, China.}
\email{jgying@fudan.edu.cn}
\thanks{Research supported in part by NSFC grant 11271240.}

%    General info
\subjclass[2000]{31C25, 60J57, 60J45.}

%\date{January 1, 2001 and, in revised form, June 22, 2001.}

%\dedicatory{This paper is dedicated to our advisors.}

\keywords{Dirichlet form, regular Dirichlet subspace, killing transform, resurrected transform, multiplicative functional.}

\begin{abstract}
In this paper, we shall consider the killing transform induced by a multiplicative functional on regular Dirichlet subspaces of a fixed Dirichlet form. Roughly speaking, a regular Dirichlet subspace is a closed subspace with Dirichlet and regular properties of fixed Dirichlet space. By using the killing transforms, our main results indicate that the big jump part of fixed Dirichlet form is not essential for discussing its regular Dirichlet subspaces. This fact is very similar to the status of killing measure when we consider the questions about regular Dirichlet subspaces in \cite{LY15-2}. 
\end{abstract}

\maketitle

\section{Introduction}\label{SEC1}

The killing transform induced by a multiplicative functional (MF for short) is a very important transform in the theory of Markov processes. We refer the  introduction to multiplicative functionals and killing transform to \cite{BG68}. In the context of nearly symmetric Markov processes, the second author successfully described the associated Dirichlet form of Markov process after killing transform via bivariate Revuz measure in \cite{Y96}. Then we extended the results of \cite{Y96} to semi-Dirichlet forms in \cite{LY15}. Roughly speaking, fix a nearly symmetric Markov process $X$ whose state space and reference measure are denoted by $E$ and $m$, its associated (non-symmetric) Dirichlet form on $L^2(E,m)$ is $(\EE,\FF)$. Let $M$ be an exact multiplicative functional of $X$ and $\nu_M$ its bivariate Revuz measure relative to $m$. Then the subprocess $(X,M)$ of $X$ killed by $M$ is still nearly symmetric on $E$, and its associated Dirichlet form on $L^2(E,m)$ is given by
\begin{equation}\label{EQ1FMF}
\begin{aligned}
	&\FF^M=\FF\cap L^2(E,\rho_M)\cap L^2(E,\lambda_M) \\
	&\EE^M(u,v)=\EE(u,v)+\nu_M(u\otimes v),\quad u,v\in \FF^M,
\end{aligned}\end{equation}
where $\rho_M$ and $\lambda_M$ stand for the left and right marginal measures of $\nu_M$ respectively, and $u\otimes v(x,y):=u(x)v(y)$ for any $x,y\in E$. 

On the other hand, the second author with his co-authors first introduced a new conception in the theory of Dirichlet forms, say regular Dirichlet subspace, in 2003 and then characterized all regular Dirichlet subspaces of 1-dimensional Brownian motion in \cite{FFY05}. To introduce this conception, we refer the background of Dirichlet forms and relevant potential theories to \cite{CF12} and \cite{FOT11}. Let $E$ be a locally compact separable metric space and $m$ a fully supported Radon measure on $E$. Given two regular Dirichlet forms $(\EE,\FF)$ and $(\EE',\FF')$ on $L^2(E,m)$, if 
\[
	\FF'\subset \FF, \quad \EE(u,v)=\EE'(u,v),\quad u,v\in \FF',
\]
then $(\EE',\FF')$ is called a \emph{regular Dirichlet subspace} (also \emph{regular subspace} in abbreviation) of $(\EE,\FF)$. We usually use $(\EE',\FF')\prec (\EE,\FF)$ to represent that $(\EE',\FF')$ is a regular subspace of $(\EE,\FF)$. 
Recently in our last paper \cite{LY15-2}, we together explored the basic structure of regular Dirichlet subspaces for general Dirichlet forms, in which some celebrated process transforms are employed. Particularly, the killing transform induced by a positive continuous additive functional and its inverse transform, i.e. the resurrected transform, indicate that the killing part of a Dirichlet form is not essential for its regular subspaces. More precisely, if $(\EE,\FF)$ is a regular Dirichlet form on $L^2(E,m)$ whose killing measure is not zero, then we may write its resurrected Dirichlet form $(\EE^\mathrm{res},\FF^\mathrm{res})$ through the technique in Theorem~5.2.17 of \cite{CF12}. Naturally, $(\EE^\mathrm{res}, \FF^\mathrm{res})$ has no killing inside, or in other words, its killing measure equals zero. We found that $(\EE,\FF)$ and $(\EE^\mathrm{res},\FF^\mathrm{res})$ share the same structure of regular subspaces, i.e. if $(\EE',\FF')$ is a regular subspace of $(\EE,\FF)$, then its resurrected Dirichlet form $(\EE^{\mathrm{res}'},\FF^{\mathrm{res}'})$ is also a regular subspace of $(\EE^\mathrm{res},\FF^\mathrm{res})$. The contrary fact is still right via the killing transform induced by the killing measure of $(\EE,\FF)$. 

In general, we are always curious about the essential factor in a Dirichlet form to produce non-trivial regular subspaces. 
In this paper, we shall use the killing transforms induced by MFs to explore the big jump part of $(\EE,\FF)$. Our main results indicate that 
the big jump part of $(\EE,\FF)$ is not essential for producing non-trivial  regular subspaces of $(\EE,\FF)$ either. However, we need to point out that the status of small jump is still an open problem in the considerations about the regular subspaces of a Dirichlet form.

More precisely, in \S\ref{SEC2}, we shall consider the killing transform induced by an MF on regular subspaces. Let $(\EE,\FF)$ be a regular Dirichlet form on $L^2(E,m)$ and $X$ its associated symmetric Hunt process. Denote all regular subspaces of $(\EE,\FF)$ by $\mathscr{R}$.  Set $M$ to be a fixed MF of $X$. Then $(\EE^M,\FF^M)$ is the associated Dirichlet form of subprocess $(X,M)$. When $(\EE^M,\FF^M)$ is regular, we also denote its all regular subspaces by $\mathscr{R}_M$. Our main theorems (Theorem~\ref{THM24} and \ref{THM25}) indicate that there exists a bijection 
\[
	T_M: \mathscr{R}\rightarrow \mathscr{R}_M
\]
between all regular subspaces of $(\EE,\FF)$ and $(\EE^M,\FF^M)$, and $T_M$ is actually the killing transform induced by an equal bivariate Revuz measure $\nu_M$. In \S\ref{SEC3}, we shall decompose the jumping measure of $(\EE,\FF)$ into a big jump part and a small jump part. The idea of this decomposition comes from It\^o's decomposition in cases of L\'evy processes. Particularly, the L\'evy measure is a big jump measure if and only if it is finite. Then we may naturally subtract the big jump part from $(\EE,\FF)$, and Theorem~\ref{THM33} proved that this procedure is exactly a combination of killing transform by some MF and resurrected transform. Hence we may deduce from \S\ref{SEC2} and \cite{LY15-2} that the big jump part is not a essential factor to produce non-trivial regular subspaces. Finally in \S\ref{SEC4}, we shall give some typical examples, say pure jump step processes, to illustrate that the pure big jump type Dirichlet forms have no proper regular subspaces. 

\section{Killing transforms induced by MFs on regular subspaces}\label{SEC2}

Let $E$ be a locally compact separable metric space and $m$ a fully supported Radon measure on $E$. We use $C_c(E)$ to denote all continuous functions with compact supports on $E$. Further let $(\EE,\FF)$ be a fixed regular symmetric Dirichlet form on $L^2(E,m)$ with $X$ being its associated $m$-symmetric Hunt process. Then $(\EE,\FF)$ admits the following Beurling-Deny decomposition: for any $u,v\in \FF\cap C_c(E)$, 
\begin{equation}\label{EQ2EUV}
\begin{aligned}
	\EE(u,v)&=\EE^{(c)}(u,v)\\&\quad +\int_{E\times E\setminus d} \left(u(x)-u(y)\right)\left(v(x)-v(y)\right) J(dxdy)+\int_E u(x)v(x)k(dx),
\end{aligned}\end{equation}
where $\EE^{(c)}$ is its strongly local part, $d$ is the diagonal of $E\times E$,  $J$ and $k$ are the jumping and killing measures. Without loss of generality, we always take the function in Dirichlet space $\FF$ as its quasi-continuous version.  Moreover, $\text{MF}(X)$ stands for all exact decreasing MFs of $X$ such that $M_0\equiv 1$ (if $M=(M_t)_{t\geq 0}$ 
represents the MF). For any $M\in \text{MF}(X)$, the subprocess $(X,M)$ is nearly symmetric relative to $m$ (see \cite{Y96}) and its associated Dirichlet form is given by \eqref{EQ1FMF}. 

\begin{definition}
	Given $M\in \text{MF}(X)$, $M$ is called $m$-symmetric if the subprocess $(X,M)$ is $m$-symmetric. 
\end{definition}

Denote the bivariate Revuz measure of $M$ by $\nu_M$ and 
\[
	\check{\nu}_M(f\otimes g):=\nu_M(g\otimes f)
\]
for any non-negative functions $f,g$. Then it follows from \eqref{EQ1FMF} and Theorem~6.11 of \cite{Y96} that $M$ is $m$-symmetric if and only if 
\[
	\nu_M=\check{\nu}_M. 
\]
Furthermore, the left and right marginal measures of $\nu_M$ equals, i.e. $\rho_M=\lambda_M$. We rewrite them as $\mu_M$, in other words, let $\mu_M:=\rho_M=\lambda_M$. The following lemma is natural, but we do not find it in other place. And the proof is not trivial.

\begin{lemma}\label{LM22}
	Assume $M\in \text{MF}(X)$ to be $m$-symmetric. If $\mu_M$ is Radon on $E$, then the associated Dirichlet form $(\EE^M,\FF^M)$ of subprocess $(X,M)$ is a regular Dirichlet form on $L^2(E,m)$. Furthermore, any special standard core of $(\EE,\FF)$ is still a special standard core of $(\EE^M,\FF^M)$. 
\end{lemma}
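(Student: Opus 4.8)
The plan is to show that $(\EE^M, \FF^M)$ inherits the two defining properties of a regular Dirichlet form — the Markovian/closed-form structure on $L^2(E,m)$ and the regularity (density of a nice core in the appropriate norms) — from $(\EE,\FF)$, using the explicit description \eqref{EQ1FMF}. The key observation is that under the $m$-symmetry hypothesis the Revuz measure satisfies $\nu_M = \check\nu_M$ with common marginal $\mu_M$, so that for $u,v \in \FF^M$ we may write $\EE^M(u,v) = \EE(u,v) + \nu_M(u\otimes v)$ with a symmetric, nonnegative-definite perturbation, and $\FF^M = \FF \cap L^2(E,\mu_M)$. I would begin by verifying that $(\EE^M,\FF^M)$ is a closed symmetric form: symmetry is immediate from $\nu_M=\check\nu_M$, and closedness follows because $\nu_M(u\otimes u) \geq 0$ together with the fact that $L^2(E,\mu_M)$-convergence plus $\FF$-convergence control the perturbation term (a standard Fatou/monotonicity argument on the nonnegative bilinear form). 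The Markovian property is preserved because the normal contraction $u \mapsto (0\vee u)\wedge 1$ decreases both $\EE$ and the integral $\nu_M(u\otimes u)$.

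The substantive work is \emph{regularity}, and here I would focus the whole effort on proving the second sentence of the lemma — that any special standard core $\mathcal C$ of $(\EE,\FF)$ remains a special standard core of $(\EE^M,\FF^M)$ — since this immediately yields regularity. A special standard core is, by definition, a subalgebra $\mathcal C \subset \FF \cap C_c(E)$ that is dense in $C_c(E)$ in the uniform norm and dense in $\FF$ in the $\EE_1$-norm, and closed under the relevant truncations. The crucial point to check is that $\mathcal C \subset \FF^M$, i.e. that every $w \in \mathcal C$ lies in $L^2(E,\mu_M)$; this is exactly where the hypothesis that $\mu_M$ is \emph{Radon} enters, because $w$ has compact support $K$ and is bounded, so $\int_E w^2\, d\mu_M \leq \|w\|_\infty^2\, \mu_M(K) < \infty$. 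Uniform density of $\mathcal C$ in $C_c(E)$ is inherited verbatim since it is a property of $\mathcal C$ itself and the space $E$, unchanged by the transform.

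The main obstacle, and the step I expect to require genuine care, is showing that $\mathcal C$ is dense in $(\FF^M, \EE^M_1)$. The natural strategy is: given $u \in \FF^M$, first approximate it in $\FF$ by a sequence $u_n \in \mathcal C$ in the $\EE_1$-norm, and then upgrade this to $\EE^M_1$-convergence, which additionally demands $u_n \to u$ in $L^2(E,\mu_M)$ (equivalently $\nu_M((u_n-u)\otimes(u_n-u)) \to 0$). The difficulty is that $\EE_1$-convergence alone need not control the $\mu_M$-norm, so one cannot simply take the sequence furnished by the core property of $\mathcal C$. The plan is to reduce to approximating bounded, compactly supported $u$ first — using that such functions are dense in $\FF^M$ for the $\EE^M_1$-norm via standard truncation and cutoff arguments (Markovian property plus the fact that $\mu_M$ is Radon, so cutoffs have finite $\mu_M$-mass) — and then to choose the core approximants with an additional uniform-boundedness normal contraction so that dominated convergence against the finite measure $\mu_M\!\restriction_K$ forces $L^2(\mu_M)$-convergence. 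I would expect the technical heart to be a careful simultaneous approximation lemma combining the $\EE$-core property with $L^2(\mu_M)$ control, exploiting that $\EE_1$-convergence gives a subsequence converging $\mathcal E$-quasi-everywhere (hence $m$-a.e., and after the boundedness reduction also $\mu_M$-a.e. since $\mu_M$ charges no $\EE$-polar set as a smooth measure), at which point dominated convergence over the Radon measure $\mu_M$ closes the argument.
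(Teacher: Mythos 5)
Your overall direction is right --- the proof does come down to showing that $\EE_1$-approximation from the core can be upgraded to simultaneous $L^2(\mu_M)$-approximation --- but there are two concrete problems. First, the claim that $\nu_M(u\otimes u)\ge 0$ and that the perturbation is ``nonnegative-definite'' is false. Writing $u(x)u(y)=\tfrac12\bigl[u(x)^2+u(y)^2-(u(x)-u(y))^2\bigr]$ off the diagonal and using the symmetry of $\nu_M$, one finds that the killing transform \emph{subtracts} $\tfrac12\nu_M|_{E\times E\setminus d}$ from the jumping measure while adding $\mu_M$ to the killing measure (this is exactly \eqref{EQA1EMU}); the bilinear perturbation $\nu_M(u\otimes u)$ has indefinite sign (take $\nu_M=\delta_{(a,b)}+\delta_{(b,a)}$ and $u(a)u(b)<0$), so your closedness and Markovianity arguments as stated do not go through. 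Fortunately neither is really at stake: that $(\EE^M,\FF^M)$ given by \eqref{EQ1FMF} is a (closed, Markovian) Dirichlet form is quoted from \cite{Y96}, and the actual content of the lemma is regularity. The inequality that \emph{is} true, by symmetry and $ab\le\tfrac12(a^2+b^2)$, is $\nu_M(w\otimes w)\le\int_E w^2\,d\mu_M$, whence $\EE^M_1(w,w)\le\EE_1(w,w)+\int_E w^2\,d\mu_M=\EE^{\mu_M}_1(w,w)$; that one-sided domination is what the argument needs.

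Second, the step you defer as ``a careful simultaneous approximation lemma'' is precisely where the paper avoids doing any work: since $\mu_M$ is a positive Radon smooth measure, the perturbed Dirichlet form $(\EE^{\mu_M},\FF^{\mu_M})$ with $\FF^{\mu_M}=\FF\cap L^2(E,\mu_M)=\FF^M$ is already known to be regular with every special standard core of $(\EE,\FF)$ as a special standard core (\S6.1 of \cite{FOT11}). Given $u\in\FF^M$ one therefore gets $u_n\in\mathcal C$ with $\EE^{\mu_M}_1(u_n-u,u_n-u)\to 0$, and the domination $\EE^M_1\le\EE^{\mu_M}_1$ on $\mathcal C$ (which the paper derives from the Beurling--Deny form of $\EE^M$ together with $\nu_M|_{E\times E\setminus d}<2J$ from Proposition~4.12 of \cite{Y96}) finishes the proof in one line. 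Your truncation/cutoff/quasi-everywhere-subsequence plan is essentially a sketch of the proof of that cited fact; it is plausible but not carried out, and as submitted the decisive density step remains open. If you either cite the perturbed-form regularity or fully execute the simultaneous approximation, and replace the false nonnegativity claim by the correct one-sided bound $\nu_M(w\otimes w)\le\|w\|_{L^2(\mu_M)}^2$, your argument closes and coincides with the paper's.
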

\begin{proof}
Clearly, it follows from \eqref{EQ1FMF} that
\begin{equation}\label{EQ2FMF}
 \FF^M=\FF\cap L^2(E,\mu_M). 
\end{equation}
From the property of bivariate Revuz measure (see (4.5c) of \cite{Y96}), we may obtain that $\mu_A$ is a Radon smooth measure relative to $X$. Thus the perturbed Dirichlet form $(\EE^{\mu_M},\FF^{\mu_M})$ of $(\EE,\FF)$ induced by $\mu_M$ (see \S6.1 of \cite{FOT11}) is given by
\begin{equation}
\begin{aligned}
	\FF^{\mu_M}&=\FF\cap L^2(E,\mu_M), \\
	\EE^{\mu_M}(u,v)&=\EE(u,v)+\int_E u(x)v(x)\mu_M(x), \quad u,v \in \FF^{\mu_M}. 
\end{aligned}
\end{equation}
In particular, $(\EE^{\mu_M},\FF^{\mu_M})$ is a regular Dirichlet form on $L^2(E,m)$, and any special standard core $\mathcal{C}$ of $(\EE,\FF)$ is still a special standard core of $(\EE^{\mu_M},\FF^{\mu_M})$. Note that $\FF^M=\FF^{\mu_M}$. Hence it suffices to prove that $\mathcal{C}$ is dense in $\FF^M$ with respect to the norm $\|\cdot \|_{\EE^M_1}$. In fact, assume that $(\EE,\FF)$ admits the Beurling-Deny decomposition \eqref{EQ2EUV}. Then it follows from Proposition~4.12 of \cite{Y96} that
\begin{equation}\label{EQ2NME}
	\nu_M|_{E\times E\setminus d}<2J. 
\end{equation}
Hence for any $u\in \mathcal{C}\subset \FF\cap C_c(E)$, we have
\begin{equation}\label{EQA1EMU}
\begin{aligned}
	\EE^M(u,u)=\EE^{(c)}&(u,u)+\int_{E\times E\setminus d} \big(u(x)-u(y)\big)^2(J-\frac{1}{2}\nu_M)(dxdy)\\&+\int_E u(x)^2 (k+\mu_M)(dx).
\end{aligned}	\end{equation}
Particularly,
\begin{equation}\label{EQA1EMUU}
\EE^M(u,u)\leq \EE^{\mu_M}(u,u),\quad u\in \mathcal{C}.
\end{equation}
Consequently, for any function $u\in \FF^M=\FF^{\mu_M}$, we may take a sequence of functions
\[
	\{u_n: n\geq 1\}\subset \mathcal{C}
	\]
such that when $n\rightarrow \infty$, $\EE^{\mu_M}_1(u_n-u,u_n-u)\rightarrow 0$. Apparently, we may deduce from \eqref{EQA1EMUU} that $\EE^M_1(u_n-u,u_n-u)\rightarrow 0$. In other words, $\mathcal{C}$ is also a special standard core of $(\EE^M, \FF^M)$. That completes the proof. 
\end{proof}
\begin{remark}\label{RM23}
	We may regard the killing transform induced by $M\in \text{MF}(X)$ as the combination of two killing transforms: the first one is decided by the part of $\nu_M$ outside the diagonal $d$, i.e. the second term in Beurling-Deny formula \eqref{EQA1EMU}, which kills lots of jumps in intuition; the second one adds killing inside, which is expressed in the third term of \eqref{EQA1EMU}. 
\end{remark}

Because of Lemma~\ref{LM22}, we denote all MFs $M\in \text{MF}(X)$ such that $M$ is $m$-symmetric and $\mu_M$ is Radon by $\overset{\circ}{\text{MF}}(X)$. For any $M\in \text{MF}(X)$, set 
\[
	M(\EE,\FF):=(\EE^M,\FF^M).
\]
In other words, we use $M(\cdot, \cdot)$ to stand for the killing transform induced by $M$. If $\nu_M$ is the bivariate Revuz measure of $M$, we also write 
\[
\nu_M(\EE,\FF):=M(\EE,\FF).
\]
In particular, if $M\in \overset{\circ}{\text{MF}}(X)$, then the Dirichlet form after killing transform is still regular on $L^2(E,m)$. Now let us consider the regular subspaces of $(\EE,\FF)$. We first assert that any regular subspace may be mapped uniquely to a regular subspace of $M(\EE,\FF)$ via the common feature (the equal bivariate Revuz measure). 

\begin{theorem}\label{THM24}
	Fix the regular Dirichlet form $(\EE,\FF)$ on $L^2(E,m)$, its associated symmetric Hunt process $X$ and an MF $M\in \overset{\circ}{\text{MF}}(X)$. Assume that $(\EE',\FF')$ is a regular subspace of $(\EE,\FF)$, i.e. $(\EE',\FF')\prec (\EE,\FF)$, its associated symmetric Hunt process is denoted by $X'$. Then there exists a unique MF $M'\in \overset{\circ}{\text{MF}}(X')$ such that
	\[
		M'(\EE',\FF')\prec M(\EE,\FF). 
	\]
Particularly, the bivariate Revuz measures of $M$ and $M'$ equals, i.e. $\nu_M=\nu_{M'}$. Here, the uniqueness of MF is in sense of $m$-equivalence (see Definition~2.1 of \cite{Y96}). 
\end{theorem}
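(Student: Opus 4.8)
The plan is to transplant the bivariate Revuz measure $\nu_M$ from $X$ to $X'$ and take the killing transform it induces on $(\EE',\FF')$ as the map $T_M$. I would rely on the correspondence of \cite{Y96} which, up to $m$-equivalence, matches the $m$-symmetric exact MFs of a fixed process with the symmetric bivariate smooth measures whose off-diagonal part is dominated by twice the jumping measure. Since $\nu_M=\check{\nu}_M$ is already symmetric, the construction of $M'$ reduces to verifying that $\nu_M$ meets, relative to $X'$, the admissibility conditions of that correspondence; granting this, \cite{Y96} yields an $m$-symmetric exact MF $M'$ of $X'$, unique modulo $m$-equivalence, with $\nu_{M'}=\nu_M$, which is the asserted equality of bivariate Revuz measures.

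The verification uses two facts about $(\EE',\FF')\prec(\EE,\FF)$ drawn from the structure theory of \cite{LY15-2}: a regular subspace has the \emph{same} jumping and killing measures as the ambient form, i.e. $J'=J$ and $k'=k$. (One sees $J'=J$ directly by testing $\EE'=\EE$ on pairs $u,v\in\FF'\cap C_c(E)$ with disjoint supports, which isolates the off-diagonal jumping measure; a parallel argument gives $k'=k$.) Hence the domination $\nu_M|_{E\times E\setminus d}<2J$ of Proposition~4.12 of \cite{Y96} becomes $\nu_M|_{E\times E\setminus d}<2J'$, the correct off-diagonal bound for $X'$. Moreover $\mu_M$ is Radon by hypothesis and is smooth relative to $X'$: since $\FF'\subset\FF$ and $\EE'=\EE$ on $\FF'$ the capacities satisfy $\mathrm{Cap}'\geq\mathrm{Cap}$, so every $\mathrm{Cap}'$-polar set is $\mathrm{Cap}$-polar, hence $\mu_M$-null. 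Together with $\nu_M=\check{\nu}_M$ this places $M'$ in $\overset{\circ}{\text{MF}}(X')$, and Lemma~\ref{LM22} applied to $X'$ makes $M'(\EE',\FF')$ a regular Dirichlet form on $L^2(E,m)$.

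Next I would check the subspace relation. By \eqref{EQ1FMF} and $\mu_{M'}=\mu_M$, the domain $(\FF')^{M'}=\FF'\cap L^2(E,\mu_M)$ is contained in $\FF\cap L^2(E,\mu_M)=\FF^M$; and for $u,v\in(\FF')^{M'}\subset\FF'$ the identities $\EE'(u,v)=\EE(u,v)$ and $\nu_{M'}=\nu_M$ give $(\EE')^{M'}(u,v)=\EE'(u,v)+\nu_{M'}(u\otimes v)=\EE(u,v)+\nu_M(u\otimes v)=\EE^M(u,v)$. As both forms are regular, this is precisely $M'(\EE',\FF')\prec M(\EE,\FF)$.

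Finally, for uniqueness suppose $M''\in\overset{\circ}{\text{MF}}(X')$ also satisfies $M''(\EE',\FF')\prec M(\EE,\FF)$. Reading the Beurling--Deny data off \eqref{EQA1EMU}, the jumping and killing measures of $M''(\EE',\FF')$ are $J'-\tfrac12\nu_{M''}|_{E\times E\setminus d}$ and $k'+\mu_{M''}$, while those of $M(\EE,\FF)$ are $J-\tfrac12\nu_M|_{E\times E\setminus d}$ and $k+\mu_M$. Since the subspace relation preserves these measures and $J'=J$, $k'=k$, I obtain $\nu_{M''}|_{E\times E\setminus d}=\nu_M|_{E\times E\setminus d}$ and $\mu_{M''}=\mu_M$; comparing the marginals of the (equal) off-diagonal parts then forces the diagonal parts to agree as well, whence $\nu_{M''}=\nu_M$ and $M''$ is $m$-equivalent to $M'$. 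I expect the existence step to be the crux: the entire argument rests on the preservation of the jumping and killing measures under passage to a regular subspace, which is exactly what transfers the admissibility of $\nu_M$ from $X$ to $X'$.
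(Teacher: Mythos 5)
Your proposal is correct and follows essentially the same route as the paper: both arguments transfer $\nu_M$ to $X'$ by invoking the preservation of the jumping and killing measures under passage to a regular subspace (Theorem~1 of \cite{LY15-2}), the bound $\nu_M|_{E\times E\setminus d}<2J$ from Proposition~4.12 of \cite{Y96}, and the bivariate-smooth-measure/MF correspondence of Theorem~4.3 of \cite{Y96-2}, and both settle uniqueness by reading the off-diagonal part and the marginal off the Beurling--Deny data of $M''(\EE',\FF')$ and then citing the $m$-equivalence criterion of \cite{Y96}. The only differences are cosmetic: you spell out the capacity comparison behind the smoothness of $\mu_M$ relative to $X'$ (which the paper delegates to Remark~1 of \cite{LY15-2}) and you verify the inclusion $M'(\EE',\FF')\prec M(\EE,\FF)$ by hand rather than citing Lemma~\ref{LM22}.
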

\begin{proof}
Let $\nu:=\nu_M$. Then the left and right marginal measures of $\nu$ are both equal to $\mu_M$, which is a smooth Radon measure with respect to $(\EE,\FF)$. It follows from Remark~1 of \cite{LY15-2} that $\mu_M$ is also a smooth Radon measure with respect to $(\EE',\FF')$.  On the other hand, from Theorem~1 of \cite{LY15-2}, $(\EE',\FF')$ has the same jumping and killing measures as $(\EE,\FF)$. Note that \eqref{EQ2NME} indicates that 
\[
	\nu|_{E\times E\setminus d}<2J. 
\]
Then we can deduce that $\nu$ is a bivariate smooth measure with respect to $(\EE',\FF')$. We refer the definition of bivariate smooth measure to \S4 of \cite{Y96-2}. Hence it follows from Theorem~4.3 of \cite{Y96-2} that there exists an MF $M'\in \text{MF}(X')$ such that 
\[
	\nu=\nu_{M'},
\]
where $\nu_{M'}$ is the bivariate Revuz measure of $M'$ relative to $X'$. In particular, since $\nu$ is $m$-symmetric, and its left and right marginal measures are Radon on $E$, it follows that $M'\in \overset{\circ}{\text{MF}}(X')$. Therefore, from the definition of regular subspace, \eqref{EQ1FMF} and Lemma~\ref{LM22}, we can obtain that
\[
	M'(\EE',\FF')\prec M(\EE,\FF). 
\]

Finally, we shall prove the uniqueness of $M'$. Assume that $M^1,M^1\in \overset{\circ}{\text{MF}}(X')$ are two MFs that satisfy the above conditions. Denote their bivariate Revuz measures by $\nu^1,\nu^2$ respectively. Moreover, for $i=1,2$, set $\mu^i$ to be the left (right) marginal measure of $\nu^i$. Then from Theorem~1 of \cite{LY15-2} and \eqref{EQA1EMU}, we know that the jumping measure of $M^i(\EE',\FF')$ equals 
\[
	J-\frac{1}{2}\nu^i|_{E\times E\setminus d}=J-\frac{1}{2}\nu_M|_{E\times E\setminus d},
\]
and its killing measure equals 
\[
	k+\mu^i=k+\mu_M.
\]
In other words, outside the diagonal $d$ of $E\times E$, $\nu^1=\nu^2=\nu_M$, whereas $\mu^1=\mu^2=\mu_M$. Therefore,
\[
	\nu^1=\nu^2=\nu_M.
\]
From Theorem~6.3 of \cite{Y96}, we can obtain that $M^1$ and $M^2$ are $m$-equivalent. That completes the proof. 
\end{proof}

Denote all regular subspaces of $(\EE,\FF)$ and $M(\EE,\FF)$ by $\mathscr{R}$ and $\mathscr{R}_M$ respectively. Then Theorem~\ref{THM24} implies that the following mapping
\begin{equation}\label{EQ2TMR}
	T_M: \mathscr{R}\rightarrow \mathscr{R}_M,\quad  (\EE',\FF')\mapsto \nu_M(\EE',\FF')
\end{equation}
is well-defined and injective. The following theorem indicates that it is also a surjection. In other words, $T_M$ is a bridge with the common feature $\nu_M$ between all regular subspaces of $(\EE,\FF)$ and $M(\EE,\FF)$. 

\begin{theorem}\label{THM25}
	Fix a regular Dirichlet form $(\EE,\FF)$, its associated symmetric Hunt process $X$ and an MF $M\in \overset{\circ}{\text{MF}}(X)$. Assume that $(\mathcal{A},\mathcal{G})$ is a regular subspace of $M(\EE,\FF)$, i.e. $(\mathcal{A},\mathcal{G})\prec M(\EE,\FF)$. Then there always exist a unique regular subspace $(\EE',\FF')$ of $(\EE,\FF)$ and a unique MF $M'\in \overset{\circ}{\text{MF}}(X')$ such that 
	\[
		(\mathcal{A},\mathcal{G})=M'(\EE',\FF'),
	\]
	where $X'$ is the associated symmetric Hunt process of $(\EE',\FF')$. Furthermore, the bivariate Revuz measure of $M'$ is equal to $\nu_M$, i.e. $\nu_{M'}=\nu_M$.  
\end{theorem}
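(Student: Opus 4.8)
The plan is to invert the killing transform by hand: starting from $(\mathcal{A},\mathcal{G})$ I will build a candidate regular subspace $(\EE',\FF')$ of $(\EE,\FF)$, and then check that feeding it through $\nu_M$ returns $(\mathcal{A},\mathcal{G})$. First I would fix a special standard core $\mathcal{C}$ of the regular form $(\mathcal{A},\mathcal{G})$; then $\mathcal{C}\subset\mathcal{G}\subset\FF^M\subset\FF$, and $\mathcal{C}$ is $\mathcal{A}_1$-dense in $\mathcal{G}$ and uniformly dense in $C_c(E)$. The crucial observation is that on $\mathcal{C}$ the original energy reappears: since $(\mathcal{A},\mathcal{G})\prec M(\EE,\FF)$, for $u,v\in\mathcal{C}$ one has $\mathcal{A}(u,v)=\EE^M(u,v)=\EE(u,v)+\nu_M(u\otimes v)$. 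Accordingly I define $\FF'$ to be the closure of $\mathcal{C}$ in $(\FF,\EE_1)$ and $\EE':=\EE|_{\FF'\times\FF'}$.

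Next I would verify that $(\EE',\FF')$ is a regular Dirichlet subspace of $(\EE,\FF)$. Since $\FF'$ is a closed subspace of the Hilbert space $(\FF,\EE_1)$ and $\EE'$ is the restriction of $\EE$, the form $(\EE',\FF')$ is closed; because the unit contraction $u\mapsto(0\vee u)\wedge 1$ maps $\mathcal{C}$ into $\mathcal{G}\cap C_c(E)\subset\FF'$, a standard closure argument shows $\FF'$ is stable under normal contractions, so $(\EE',\FF')$ is Markovian. Regularity is immediate, as $\mathcal{C}\subset\FF'\cap C_c(E)$ is $\EE_1$-dense in $\FF'$ by construction and uniformly dense in $C_c(E)$; in fact $\mathcal{C}$ remains a special standard core of $(\EE',\FF')$, the separation properties of $\mathcal{C}$ being inherited as a subset of $C_c(E)$. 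Thus $(\EE',\FF')\prec(\EE,\FF)$, and Theorem~\ref{THM24} furnishes a unique $M'\in\overset{\circ}{\text{MF}}(X')$ with $\nu_{M'}=\nu_M$ and $M'(\EE',\FF')\prec M(\EE,\FF)$.

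It then remains to identify $M'(\EE',\FF')$ with $(\mathcal{A},\mathcal{G})$, and this is where I expect the main difficulty to lie, namely in the domain identity $\FF'\cap L^2(E,\mu_M)=\mathcal{G}$. By Lemma~\ref{LM22} applied to $(\EE',\FF')$ and $M'$, the transformed form $M'(\EE',\FF')$ has domain $\FF'\cap L^2(E,\mu_M)$, energy $\EE'+\nu_M$, and retains $\mathcal{C}$ as a special standard core; hence $\mathcal{C}$ is $(\EE'+\nu_M)_1$-dense in $\FF'\cap L^2(E,\mu_M)$. On $\mathcal{C}$ one has $\EE'+\nu_M=\EE+\nu_M=\EE^M=\mathcal{A}$, so the norms $(\EE'+\nu_M)_1$ and $\mathcal{A}_1$ coincide there; since $\mathcal{C}$ is also $\mathcal{A}_1$-dense in $\mathcal{G}$, the two completions of $\mathcal{C}$ agree inside $L^2(E,m)$, giving $\FF'\cap L^2(E,\mu_M)=\mathcal{G}$ together with $(\EE'+\nu_M)|_{\mathcal{G}}=\mathcal{A}$. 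Therefore $M'(\EE',\FF')=(\mathcal{A},\mathcal{G})$, and the norm comparison underlying the proof of Lemma~\ref{LM22} is exactly what makes this step go through.

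Finally I would settle uniqueness. Suppose $(\EE'',\FF'')\prec(\EE,\FF)$ and $M''\in\overset{\circ}{\text{MF}}(X'')$ also satisfy $\nu_{M''}=\nu_M$ and $M''(\EE'',\FF'')=(\mathcal{A},\mathcal{G})$. Then $\FF''\cap L^2(E,\mu_M)=\mathcal{G}$; as $\mu_M$ is Radon, every $u\in\FF''\cap C_c(E)$ lies in $L^2(E,\mu_M)$, which forces $\FF''\cap C_c(E)=\mathcal{G}\cap C_c(E)$, and regularity of $(\EE'',\FF'')$ then yields $\FF''=\overline{\mathcal{G}\cap C_c(E)}^{\,\EE_1}=\FF'$ and $\EE''=\EE|_{\FF'}=\EE'$. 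The uniqueness of $M''$ once $(\EE',\FF')$ is pinned down is precisely the uniqueness assertion of Theorem~\ref{THM24}. This completes the plan.
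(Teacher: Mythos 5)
Your proposal is correct and follows essentially the same route as the paper's own proof: take the core $\mathcal{G}\cap C_c(E)$, form its $\EE_1$-closure to get $(\EE',\FF')\prec(\EE,\FF)$, invoke Theorem~\ref{THM24} for $M'$, identify $M'(\EE',\FF')$ with $(\mathcal{A},\mathcal{G})$ by comparing the two forms on the common special standard core via Lemma~\ref{LM22}, and derive uniqueness from the domain identity $\FF''\cap L^2(E,\mu_M)=\mathcal{G}$. The only differences are cosmetic: you verify the Markovian and regularity properties of $(\EE',\FF')$ by hand where the paper cites Theorem~3.1.1 of \cite{FOT11}, and you make explicit the (correct) use of the Radon property of $\mu_M$ in the uniqueness step.
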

\begin{proof}
We first prove the existence of $(\EE',\FF')$ and $M'$. Let
\[
	\mathcal{C}:=\mathcal{G}\cap C_c(E),
\]
which is a special standard core of $(\mathcal{A},\mathcal{G})$. Since $\mathcal{C}\subset \mathcal{G}\subset \FF^M\subset \FF$, it follows that the quadratic form $(\EE,\mathcal{C})$ is closable. Denote the smallest closed extension of $\mathcal{C}$ in $\FF$ with the norm $\|\cdot\|_{\EE_1}$ by $\FF'$. For any $u,v\in \FF'$, define
\[
	\EE'(u,v):=\EE(u,v). 
\]
Then from Theorem~3.1.1 of \cite{FOT11}, we may deduce that $(\EE',\FF')$ is a regular Dirichlet form on $L^2(E,m)$. Particularly,
\[
	(\EE',\FF')\prec (\EE,\FF). 
\]
Set $X'$ to be the associated Hunt process of $(\EE',\FF')$. We know from Theorem~\ref{THM24} that there exists a unique MF $M'\in \overset{\circ}{\text{MF}}(X')$ such that 
\[
	M'(\EE',\FF')\prec M(\EE,\FF),\quad \nu_{M'}=\nu_{M},\quad \mu_{M'}=\mu_{M}. 
\]
Thus we only need to prove that $M'(\EE',\FF')= (\mathcal{A},\mathcal{G})$. Indeed, since $\mathcal{C}$ is a special standard core of $(\EE',\FF')$, it follows from Lemma~\ref{LM22} that $\mathcal{C}$ is also a special standard core of $M'(\EE',\FF')$. In particular, for any $u,v\in \mathcal{C}$,
\[
	\EE'^{M'}(u,v)=\EE'(u,v)+\nu_{M'}(u\otimes v)=\EE(u,v)+\nu_M(u\otimes v)=\EE^M(u,v)=\mathcal{A}(u,v). 
\]
Note that $\mathcal{C}$ is also a special standard core of $(\mathcal{A},\mathcal{G})$, whereas $\mathcal{A}|_{\mathcal{C}\times \mathcal{C}}=\EE'^{M'}|_{\mathcal{C}\times \mathcal{C}}$. Therefore, we may obtain that $M'(\EE',\FF')=(\mathcal{A},\mathcal{G})$. 

Finally, let us prove the uniqueness of $(\EE',\FF')$ and $M'$. Assume that $(\EE^1,\FF^1), M^1$ and $(\EE^2,\FF^2), M^2$ are two groups of regular subspace and MF that satisfy the conditions. Since
\[
	M^1(\EE^1,\FF^1)=M^2(\EE^2,\FF^2)=(\mathcal{A},\mathcal{G}), 
\]
it follows from \eqref{EQ2FMF} that 
\[
	\FF^1\cap L^2(E,\mu_{M^1})=\FF^2\cap L^2(E,\mu_{M^2})=\mathcal{G}. 
\]
Hence 
\[
	\FF^1\cap C_c(E) =\FF^2\cap C_c(E) = \mathcal{G}\cap C_c(E). 
\]
In other words, the regular subspaces $(\EE^1,\FF^1)$ and $(\EE^2,\FF^2)$ of $(\EE,\FF)$ have the common special standard core $\mathcal{G}\cap C_c(E)$. Apparently, we have $(\EE^1,\FF^1)=(\EE^2,\FF^2)$. Then the uniqueness of MF is directly from Theorem~\ref{THM24}. That completes the proof. 
\end{proof}

\begin{remark}\label{RM26}
	In this note, we shall consider a special case of Theorem~\ref{THM24} and Theorem~\ref{THM25}. Without loss of generality, assume that $(\EE,\FF)$ has no killing inside, in other words, the killing measure $k$ in Beurling-Deny formula \eqref{EQ2EUV} equals zero. Let $A$ be a positive continuous additive functional of $X$ whose Revuz measure $\mu_A$ is Radon on $E$ (see \S6.1 of \cite{FOT11}). Further set 
	\[
		M_t:=\exp\{-A_t\},\quad t\geq 0.
	\]
One may easily check that $M\in \overset{\circ}{\text{MF}}(X)$ and the subprocess $(X,M)$ corresponds to the perturbed Dirichlet form $(\EE^{\mu_A},\FF^{\mu_A})$ of $(\EE,\FF)$ with respect to $\mu_A$. Denote 
\[
	\mu_A(\EE,\FF):=(\EE^{\mu_A},\FF^{\mu_A})=M(\EE,\FF),\quad \mathscr{R}_A:=\mathscr{R}_M.  
\]
The above two theorems indicate that
\[
 	T_A: \mathscr{R}\rightarrow \mathscr{R}_A,\quad (\EE',\FF')\mapsto \mu_A(\EE',\FF')
\]	
is a bijection. Particularly, the regular subspace $(\EE',\FF')$ of $(\EE,\FF)$ is actually the resurrected Dirichlet form of regular subspace $\mu_A(\EE',\FF')$ of $\mu_A(\EE,\FF)$. Note that this fact has been illustrated in \S2.2.3 of \cite{LY15-2}. 
\end{remark}

At the end of this section, we shall describe the inverse mapping of $T_M$, which is given by \eqref{EQ2TMR}. Without loss of generality, we still assume that $X$ has no killing inside, i.e. $k=0$. Let $S_M:=T_M^{-1}$. 

In Remark~\ref{RM23}, we noted that $T_M$ may be decomposed into two parts: one kills some jumps and the other is the perturbation induced by $\mu_M$. Clearly, the inverse of perturbation is exactly the resurrected transform. More precisely, taking any regular subspace $(\mathcal{A},\mathcal{G})\in \mathscr{R}_M$, there exists a unique regular subspace $(\EE',\FF')\in \mathscr{R}$ such that 
\begin{equation}\label{EQ2AGM}
(\mathcal{A},\mathcal{G})=\nu_M(\EE',\FF').
\end{equation}
 Particularly, $(\mathcal{A},\mathcal{G})$ admits the following Beurling-Deny decomposition: for any $u\in \mathcal{G}\cap C_c(E)$, 
\[
\begin{aligned}
	\mathcal{A}(u,u)=\EE^{(c)}&(u,u)+\int_{E\times E\setminus d} \big(u(x)-u(y)\big)^2(J-\frac{1}{2}\nu_M)(dxdy)\\&+\int_E u(x)^2 \mu_M(dx).
\end{aligned}	\]
Thus the resurrected Dirichlet form $(\mathcal{A}^\mathrm{res},\mathcal{G}^\mathrm{res})$ of $(\mathcal{A},\mathcal{G})$ satisfies $\mathcal{G}^\mathrm{res}\cap C_c(E)=\mathcal{G}\cap C_c(E)$, and for any $u\in \mathcal{G}^\mathrm{res}\cap C_c(E)$,
\begin{equation}\label{EQ2ARU}
	\mathcal{A}^\mathrm{res}(u,u)=\EE^{(c)}(u,u)+\int_{E\times E\setminus d} \big(u(x)-u(y)\big)^2(J-\frac{1}{2}\nu_M)(dxdy).
\end{equation}

Now we shall define an add-jump transform relative to $\nu_M$ on $(\mathcal{A}^\mathrm{res},\mathcal{G}^\mathrm{res})$ to offset the first part of $T_M$. That is, for any $u,v\in \mathcal{G}\cap C_c(E)$, 
\begin{equation}\label{EQ2ARU2}
	\mathcal{A}^{\mathrm{res},\nu_M}(u,v):=\mathcal{A}^\mathrm{res}(u,v)+\frac{1}{2}\int_{E\times E\setminus d}\left( u(x)-u(y)\right)\left(v(x)-v(y)\right)\nu_M(dxdy).
\end{equation}
We assert that the quadratic form $\left(\mathcal{A}^{\mathrm{res},\nu_M}, \mathcal{G}\cap C_c(E)\right)$ is closable on $L^2(E,m)$. Its smallest closed extension is denoted by $\left(\mathcal{A}^{\mathrm{res},\nu_M}, \mathcal{G}^{\mathrm{res},\nu_M}\right)$, which is called the add-jump transformed Dirichlet form relative to $\nu_M$ of resurrected Dirichlet form $(\mathcal{A}^\mathrm{res},\mathcal{G}^\mathrm{res})$. 

\begin{proposition}
	For any $(\mathcal{A},\mathcal{G})\in \mathscr{R}_M$, the quadratic form $\left(\mathcal{A}^{\mathrm{res},\nu_M}, \mathcal{G}\cap C_c(E)\right)$ is closable on $L^2(E,m)$. Furthermore, its smallest closed extension equals $(\EE',\FF')$, i.e.
	\begin{equation}
		\left(\mathcal{A}^{\mathrm{res},\nu_M}, \mathcal{G}^{\mathrm{res},\nu_M}\right)=(\EE',\FF'),
	\end{equation}
where $(\EE',\FF')=S_M(\mathcal{A},\mathcal{G})$ is the regular subspace of $(\EE,\FF)$ in \eqref{EQ2AGM}. 
\end{proposition}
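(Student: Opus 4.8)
The plan is to establish the proposition in two movements: first prove closability of the quadratic form $\left(\mathcal{A}^{\mathrm{res},\nu_M}, \mathcal{G}\cap C_c(E)\right)$, and then identify its closure with the regular subspace $(\EE',\FF')=S_M(\mathcal{A},\mathcal{G})$. The essential observation driving both steps is that, by construction, the add-jump transform applied to the resurrected form algebraically cancels the killing transform induced by $\nu_M$. Concretely, combining the Beurling--Deny expression for $\mathcal{A}^\mathrm{res}$ in \eqref{EQ2ARU} with the definition \eqref{EQ2ARU2}, the jump integrals involving $J-\tfrac12\nu_M$ and $\tfrac12\nu_M$ recombine, so that for any $u,v\in\mathcal{G}\cap C_c(E)$ one has $\mathcal{A}^{\mathrm{res},\nu_M}(u,v)=\EE^{(c)}(u,v)+\int_{E\times E\setminus d}(u(x)-u(y))(v(x)-v(y))J(dxdy)=\EE(u,v)$, the last equality using $k=0$. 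This is the computational heart of the argument, and once it is in place the proposition follows largely from the theory already assembled.

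First I would carry out this cancellation carefully on the core $\mathcal{C}:=\mathcal{G}\cap C_c(E)$, noting that all three measures $J$, $\nu_M$, and $\mu_M$ are those of the fixed ambient form $(\EE,\FF)$ (by Theorem~1 of \cite{LY15-2} the regular subspaces share the same jumping measure, and $\nu_M|_{E\times E\setminus d}<2J$ by \eqref{EQ2NME} guarantees the intermediate integrals converge). The upshot is the identity
\[
\mathcal{A}^{\mathrm{res},\nu_M}(u,v)=\EE(u,v),\qquad u,v\in\mathcal{C}.
\]
Since $\mathcal{C}\subset\mathcal{G}^\mathrm{res}\cap C_c(E)=\mathcal{G}\cap C_c(E)\subset\FF^M\subset\FF$, the form $(\EE,\mathcal{C})$ is the restriction of the Dirichlet form $(\EE,\FF)$ to a subspace of its domain, hence closable on $L^2(E,m)$; closability of $\left(\mathcal{A}^{\mathrm{res},\nu_M},\mathcal{C}\right)$ is then immediate from the displayed identity.

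For the identification of the closure, I would invoke Theorem~3.1.1 of \cite{FOT11} exactly as in the proof of Theorem~\ref{THM25}: the smallest closed extension of $(\EE,\mathcal{C})$ inside $\FF$ is a regular Dirichlet subspace of $(\EE,\FF)$, and its special standard core is $\mathcal{C}=\mathcal{G}\cap C_c(E)$. On the other hand, the construction of $(\EE',\FF')=S_M(\mathcal{A},\mathcal{G})$ in Theorem~\ref{THM25} is precisely the closure in $\FF$ of $(\EE,\mathcal{G}\cap C_c(E))$, so it has the identical special standard core. Two regular subspaces of $(\EE,\FF)$ sharing a common special standard core must coincide (the closure is uniquely determined), whence $\left(\mathcal{A}^{\mathrm{res},\nu_M},\mathcal{G}^{\mathrm{res},\nu_M}\right)=(\EE',\FF')$.

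The step I expect to demand the most care is the measure-theoretic bookkeeping in the cancellation identity rather than any deep structural input. One must confirm that the resurrected form's jump measure is genuinely $J-\tfrac12\nu_M$ \emph{as a measure on the subspace's jump structure}, and that adding back $\tfrac12\nu_M$ via \eqref{EQ2ARU2} recovers the full $J$ of the original subspace $(\EE',\FF')$ — not merely of the ambient form. This is legitimate precisely because Theorem~1 of \cite{LY15-2} forces every regular subspace to share $J$ with $(\EE,\FF)$, so the jump measure lost in the resurrection and restored by the add-jump transform is the same object throughout. Handling the integrability of each integral separately (so that splitting and recombining $J-\tfrac12\nu_M$ and $\tfrac12\nu_M$ is justified) via \eqref{EQ2NME} is the one place where a slip could occur, but no genuine obstacle lies here.
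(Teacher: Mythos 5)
Your proposal is correct and follows essentially the same route as the paper: establish the cancellation identity $\mathcal{A}^{\mathrm{res},\nu_M}(u,v)=\EE(u,v)=\EE'(u,v)$ on the core $\mathcal{G}\cap C_c(E)$ by combining \eqref{EQ2ARU} with \eqref{EQ2ARU2}, deduce closability from the closability of $(\EE,\mathcal{C})$ inside $\FF$, and identify the closure with $(\EE',\FF')$ via the fact (from the proof of Theorem~\ref{THM25}) that $\mathcal{G}\cap C_c(E)$ is a special standard core of $(\EE',\FF')$. The additional bookkeeping you flag about the jump measures being shared by all regular subspaces is a sound precaution but does not change the argument, which the paper carries out in exactly this way.
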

\begin{proof}
From the proof of Theorem~\ref{THM25}, we know that $\mathcal{G}\cap C_c(E)$ is a special standard core of $(\EE',\FF')$. On the other hand, it follows from \eqref{EQ2ARU} and \eqref{EQ2ARU2} that for any $u,v\in \mathcal{G}\cap C_c(E)$, 
\[
	\mathcal{A}^{\mathrm{res},\nu_M}(u,v)=\EE(u,v)=\EE'(u,v). 
\]
Then clearly $\left(\mathcal{A}^{\mathrm{res},\nu_M}, \mathcal{G}\cap C_c(E)\right)$ is closable and its smallest closed extension equals $(\EE',\FF')$. That completes the proof. 
\end{proof}

In a word, the inverse mapping $S_M$ of $T_M$ can be also decomposed into two steps: first one is the resurrected transform and the second one is the add-jump transform relative to $\nu_M$. 

\section{Big jump parts of regular subspaces}\label{SEC3}

In this section, we shall reconsider the regular subspaces of $(\EE,\FF)$, which is a fixed regular Dirichlet form on $L^2(E,m)$ and admits the Beurling-Deny decomposition \eqref{EQ2EUV}. The class of all regular subspaces of $(\EE,\FF)$ is still denoted by $\mathscr{R}$.  In our previous work \cite{LY15-2}, we have already explained that the killing part of $(\EE,\FF)$ is not essential for the constitution of $\mathscr{R}$ (see also Remark~\ref{RM26}). Thus without loss of generality, we may always assume that $k=0$. 

Now let us consider the big jump part of $(\EE,\FF)$. Note that the jumping measure $J$ is a symmetric Radon measure on $E\times E\setminus d$. Further assume that $J$ can be written as a sum of two positive symmetric Radon measures:
\begin{equation}\label{EQ3JJB}
	J=J_\mathrm{b}+J_\mathrm{s},
\end{equation}
where $J_\mathrm{b}$ satisfies that its marginal measure $\mu_\mathrm{b}(dx):=J_\mathrm{b}(dx\times E\setminus d)$ is a Radon measure on $E$. Then $J_\mathrm{b}$ is called the \emph{big jump part} of $J$, $J_\mathrm{s}$ is called the \emph{small jump part} of $J$ if its marginal measure is not Radon.  Particularly, if the marginal measure of $J$ is Radon on $E$, then $J$ is called a \emph{big jump measure}. The idea of decomposition \eqref{EQ3JJB} comes from the It\^o's decomposition of L\'evy process, and it may not be unique if exists. More precisely, we have the following examples.

\begin{example}
	Let $E$ be a finite dimensional space and $\mathtt{d}$ the metric on $E$. Take a small enough constant $\delta>0$ and let
	\[
		D_\delta:=\left\{ (x,y)\in E\times E: \delta<\mathtt{d}(x,y)<\frac{1}{\delta}\right\}. 
	\] 
	Clearly, $D_\delta\subset E\times E\setminus d$. Further set 
	\[
		J_\mathrm{b}:=J|_{D_\delta},\quad J_\mathrm{s}:=J-J_\mathrm{s}. 
	\]
We claim that $J_\mathrm{b}$ is the big jump part of $J$. In fact, for any compact subset $K$ of $E$, since $E$ is a finite dimensional space, it follows that $(K\times E)\cap D_\delta$ is a compact subset of $E\times E\setminus d$. But $J$ is Radon on $E\times E\setminus d$, thus we have
\[
	\mu_\mathrm{b}(K):=J_\mathrm{b}(K\times E\setminus d)=J((K\times E)\cap D_\delta)<\infty. 
\]
Furthermore, $J_\mathrm{s}$ is a small jump if and only if $J$ is not a big jump measure. 

Next, assume $E=\mathbf{R}^k$ for some positive integer $k$ and the jumping measure $J$ is induced by a symmetric L\'evy measure $\mathtt{n}$ on $\mathbf{R}^k\setminus \{0\}$. In other words, $\mathtt{n}$ is a symmetric measure on $\mathbf{R}^k\setminus \{0\}$ such that 
\[
	\int_{\mathbf{R}^k\setminus \{0\}} (1\wedge |x|^2)\mathtt{n}(dx)<\infty,
\]
and 
\[
	J(dxdy)=\mathtt{n}(dy-x)dx,
\]
where $dx$ is the Lebesgue measure on $\mathbf{R}^k$. Note that the L\'evy measure $\mathtt{n}$ is a big jump measure if and only if $\mathtt{n}$ is a finite measure. In general, take an arbitrary constant $\delta>0$, and let
\[
	D_\delta:=\{(x,y)\in \mathbf{R}^k\times \mathbf{R}^k: |x-y|>\delta\}. 
\]
Define 
\[
	J_\mathrm{b}:=J|_{D_\delta},\quad J_\mathrm{s}:=J-J|_{D_\delta}.
\]
We assert $J_\mathrm{b}$ is the big jump part of $J$. In fact, we may easily obtain that 
\[
	\mathtt{n}\left(\{y\in \mathbf{R}^k: |y|>\delta\}\right)<\infty,
\]
and denote this finite constant by $C_\delta$. Hence for any compact subset $K\subset \mathbf{R}^k$, we have
\[
	J_\mathrm{b}(K\times \mathbf{R}^k\setminus d)=\int_K dx\int_{\{y\in \mathbf{R}^k: |y-x|>\delta\}} \mathtt{n}(dy-x)=C_\delta\cdot \int_K dx.
\]
That implies the marginal measure $\mu_\mathrm{b}$ of $J_\mathrm{b}$ is a multiple of Lebesgue measure, which is clearly a Radon measure. This is actually a part of It\^o's decomposition for L\'evy process.

Note that for the cases of L\'evy processes, the big jump measure usually corresponds to a compound Poisson process, and the typical example of non-big jump measure is the symmetric $\alpha$-stable process for some constant $0<\alpha<2$.
\end{example}

For any $u,v\in \FF\cap C_c(E)$, define a new form
\[
	\EE^\mathrm{s}(u,v)=\EE^{(c)}(u,v)+\int_{E\times E\setminus d} \left(u(x)-u(y)\right)\left(v(x)-v(y)\right)J_\mathrm{s}(dxdy). 
\]
Roughly speaking, $\EE^\mathrm{s}$ is the small jump part of $\EE$. The following lemma asserts that the quadratic form $\left(\EE^\mathrm{s},\FF\cap C_c(E)\right)$ is a closable Markovian symmetric form on $L^2(E,m)$.

\begin{lemma}\label{LM32}
	The quadratic form $(\EE^\mathrm{s}, \FF\cap C_c(E))$ is a closable Markovian symmetric form on $L^2(E,m)$. 
\end{lemma}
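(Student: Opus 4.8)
The plan is to verify the two required properties separately: the Markovian property, which is essentially inherited from $(\EE,\FF)$, and closability, which is the substantive part. First I would establish that $(\EE^\mathrm{s},\FF\cap C_c(E))$ is a symmetric Markovian form. Symmetry is immediate from the symmetry of $J_\mathrm{s}$ and of $\EE^{(c)}$. For the Markovian property, I would observe that the strongly local part $\EE^{(c)}$ and the jump part associated with any symmetric Radon measure are each Markovian in the usual sense: for $u\in\FF\cap C_c(E)$, the normal contraction $v:=(0\vee u)\wedge 1$ satisfies $v\in\FF\cap C_c(E)$ together with $\EE^{(c)}(v,v)\le\EE^{(c)}(u,u)$ and $(v(x)-v(y))^2\le(u(x)-u(y))^2$ pointwise. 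Summing the local part and integrating the jump part against $J_\mathrm{s}$ then yields $\EE^\mathrm{s}(v,v)\le\EE^\mathrm{s}(u,u)$, which is exactly the Markovian property for the form on the core $\FF\cap C_c(E)$.

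The main obstacle is \emph{closability}, and here the natural strategy is to realize $(\EE^\mathrm{s},\FF\cap C_c(E))$ as a limit (in a monotone, form-decreasing sense) of closable forms, or else to compare it directly with the known closable form $(\EE,\FF\cap C_c(E))$. The cleanest route I would take is to exploit the decomposition $J=J_\mathrm{b}+J_\mathrm{s}$ together with the big-jump transform developed in Section~2. Precisely, since $J_\mathrm{b}$ has Radon marginal measure $\mu_\mathrm{b}$, the measure $\nu:=2J_\mathrm{b}$ (viewed as a bivariate measure off the diagonal with Radon marginals) is exactly of the type that arises as the off-diagonal part of a bivariate Revuz measure $\nu_M$ for some $m$-symmetric $M\in\overset{\circ}{\text{MF}}(X)$; one checks the admissibility condition $\nu|_{E\times E\setminus d}=2J_\mathrm{b}\le 2J$ required by \eqref{EQ2NME}. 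Killing $X$ by this $M$ subtracts precisely the big-jump part, and by Lemma~\ref{LM22} the resulting form is regular with core $\FF\cap C_c(E)$; the subsequent resurrected transform removes the added killing $\mu_\mathrm{b}$. The composite of these two transforms produces exactly $\EE^\mathrm{s}$ on the core.

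Concretely, I would argue as follows. Consider the killing transform $M(\EE,\FF)=(\EE^M,\FF^M)$ with $\nu_M|_{E\times E\setminus d}=2J_\mathrm{b}$ and $\mu_M=\mu_\mathrm{b}$. By \eqref{EQA1EMU}, for $u\in\FF\cap C_c(E)$,
\[
	\EE^M(u,u)=\EE^{(c)}(u,u)+\int_{E\times E\setminus d}\big(u(x)-u(y)\big)^2\big(J-J_\mathrm{b}\big)(dxdy)+\int_E u(x)^2\mu_\mathrm{b}(dx),
\]
and since $J-J_\mathrm{b}=J_\mathrm{s}$ this equals $\EE^\mathrm{s}(u,u)+\int_E u^2\,d\mu_\mathrm{b}$. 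By Lemma~\ref{LM22}, $(\EE^M,\FF^M)$ is a regular Dirichlet form with $\FF\cap C_c(E)$ as a special standard core, hence closable. Finally I would pass to the resurrected form to strip off the $\int u^2\,d\mu_\mathrm{b}$ term; following the technique of Theorem~5.2.17 of \cite{CF12} (used already in \S2 to define $(\mathcal{A}^\mathrm{res},\mathcal{G}^\mathrm{res})$), the resurrected form of a regular Dirichlet form is again a regular, hence closable, symmetric form, and on the core $\FF\cap C_c(E)$ it agrees precisely with $\EE^\mathrm{s}$. Closability of $(\EE^\mathrm{s},\FF\cap C_c(E))$ then follows, since a densely defined symmetric form that coincides on a dense core with a closed form is itself closable. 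The delicate point to handle carefully is that $J_\mathrm{b}$, being off-diagonal with Radon marginals and dominated by $J$, genuinely satisfies the hypotheses needed to invoke the existence of the corresponding $M$ and the applicability of Lemma~\ref{LM22}; this is where I expect the bulk of the verification to lie.
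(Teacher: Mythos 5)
Your argument is correct in substance but follows a genuinely different route from the paper's own proof of Lemma~\ref{LM32}. You establish closability by invoking the probabilistic machinery of \S\ref{SEC2}: realize $2J_\mathrm{b}$ as the bivariate Revuz measure of some $M\in\overset{\circ}{\text{MF}}(X)$, apply Lemma~\ref{LM22} to obtain a regular (hence closed) Dirichlet form $(\EE^M,\FF^M)$ whose restriction to $\FF\cap C_c(E)$ equals $\EE^\mathrm{s}$ plus a killing term, and then pass to the resurrected form. This is essentially the argument of Theorem~\ref{THM33} front-loaded into the lemma; it is not circular, since Lemma~\ref{LM22} and the existence of $M$ (Theorem~4.3 of \cite{Y96-2}) do not depend on Lemma~\ref{LM32}, and a restriction of a closed form is closable. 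The paper instead argues purely analytically: it introduces $\tilde\EE^\mathrm{s}(u,v)=\EE^\mathrm{s}(u,v)+4\int_E uv\,d\mu_\mathrm{b}$, observes the sandwich $\EE^\mathrm{s}(u,u)\le\EE(u,u)\le\tilde\EE^\mathrm{s}(u,u)$ on the core, proves closability of $\tilde\EE^\mathrm{s}$ by a direct Cauchy-sequence argument (using that $\mu_\mathrm{b}$ charges no $\EE$-polar set), and only then resurrects. The paper's route keeps Lemma~\ref{LM32} elementary and independent of the MF existence theorem, which makes the logical structure of \S\ref{SEC3} cleaner (Theorem~\ref{THM33} then cites the lemma to identify the closure); your route buys economy by reusing \S\ref{SEC2} wholesale. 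One small slip: with $\nu_M=2J_\mathrm{b}$ concentrated off the diagonal, the marginal is $\mu_M=2\mu_\mathrm{b}$ rather than $\mu_\mathrm{b}$, so the killing term in your display should read $2\int_E u^2\,d\mu_\mathrm{b}$; this is harmless, as the term is stripped off by the resurrected transform in any case.
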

\begin{proof}
Let $\mathcal{C}:=\FF\cap C_c(E)$, and $\mu_\mathrm{b}$ be the marginal measure of $J_\mathrm{b}$. Then $\mu_\mathrm{b}$ is a Radon smooth measure relative to $(\EE,\FF)$. Define another form $\tilde{\EE}^\mathrm{s}$ for any $u,v\in \mathcal{C}$:
\[
	\tilde{\EE}^\mathrm{s}(u,v):=\EE^\mathrm{s}(u,v)+4\int_E u(x)v(x)\mu_\mathrm{b}(dx). 
\]
One may easily check that 
\begin{equation}\label{EQ3ESU}
	\EE^\mathrm{s}(u,u)\leq \EE(u,u)\leq \tilde{\EE}^\mathrm{s}(u,u)
\end{equation} 
for any $u\in \mathcal{C}$.

We assert that $(\tilde{\EE}^\mathrm{s}, \mathcal{C})$ is a closable Markovian form on $L^2(E,m)$. Clearly we only need to prove its closable property. In fact, take an $\tilde{\EE}^\mathrm{s}$-Cauchy sequence $\{u_n:n\geq 1\}\subset \mathcal{C}$ such that $u_n\rightarrow 0$ as $n\rightarrow \infty$ in $L^2(E,m)$. Then it follows from \eqref{EQ3ESU} that it is also $\EE$-Cauchy and hence $u_n\rightarrow 0$ with the norm $\|\cdot\|_{\EE_1}$. Particularly, a subsequence of $\{u_n: n\geq 1\}$ is $\EE$-q.e. convergent to $0$. Since $\{u_n:n\geq 1\}$ is also a Cauchy sequence in $L^2(E,\mu_\mathrm{b})$ and $\mu_\mathrm{b}$ does not charge any $\EE$-polar set, thus we can deduce that $u_n$ is also convergent to $0$ in $L^2(E,\mu_\mathrm{b})$ as $n\rightarrow \infty$. That implies the closable property of $(\tilde{\EE}^\mathrm{s}, \mathcal{C})$. 

Finally, $(\EE^\mathrm{s},\mathcal{C})$ is actually the resurrected form of $(\tilde{\EE}^\mathrm{s}, \mathcal{C})$. Hence from Theorem~5.2.17 of \cite{CF12}, we may obtain that $(\EE^\mathrm{s}, \FF\cap C_c(E))$ is a closable Markovian form. That completes the proof. 
\end{proof}

Naturally, we denote the smallest closed extension of $(\EE^\mathrm{s}, \FF\cap C_c(E))$ by $(\EE^\mathrm{s}, \FF^\mathrm{s})$. Particularly, $(\EE^\mathrm{s},\FF^\mathrm{s})$ is a regular Dirichlet form on $L^2(E,m)$. Roughly speaking, we may regard $(\EE^\mathrm{s},\FF^\mathrm{s})$ as the rest of $(\EE,\FF)$ after subtracting the big jump part. The following theorem is the main result of this section.

\begin{theorem}\label{THM33}
There exists an MF $M\in \overset{\circ}{\text{MF}}(X)$ such that $(\EE^\mathrm{s},\FF^\mathrm{s})$ is the resurrected Dirichlet form of $M(\EE,\FF)$. 
\end{theorem}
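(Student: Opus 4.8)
The plan is to realise $M$ as the \emph{jump-killing} multiplicative functional whose bivariate Revuz measure is exactly twice the big jump part, namely $\nu_M=2J_\mathrm{b}$ concentrated on $E\times E\setminus d$ with no diagonal contribution. To see why this is the correct choice, recall that the symmetry $\nu_M=\check\nu_M$ together with the identity $u(x)v(y)+u(y)v(x)=u(x)v(x)+u(y)v(y)-(u(x)-u(y))(v(x)-v(y))$ turns \eqref{EQA1EMU} into the Beurling--Deny formula
\[
\EE^M(u,v)=\EE^{(c)}(u,v)+\int_{E\times E\setminus d}(u(x)-u(y))(v(x)-v(y))\Big(J-\tfrac12\nu_M\Big)(dxdy)+\int_E u(x)v(x)\mu_M(dx)
\]
for $u,v\in\FF\cap C_c(E)$ (with $k=0$), where $\mu_M$ is the common marginal of $\nu_M$. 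Choosing $\nu_M=2J_\mathrm{b}$ collapses the jump part to $J-J_\mathrm{b}=J_\mathrm{s}$ while the killing part becomes $2\mu_\mathrm{b}$; since the resurrected transform (Theorem~5.2.17 of \cite{CF12}) simply deletes the killing term on the core, just as in \eqref{EQ2ARU}, it then returns exactly $\EE^\mathrm{s}$.

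First I would verify that $\nu:=2J_\mathrm{b}$, viewed as a measure on $E\times E\setminus d$, is an admissible bivariate Revuz measure. By the hypothesis in \eqref{EQ3JJB}, $J_\mathrm{b}$ is a positive symmetric Radon measure, so $\nu$ is symmetric, $\nu=\check\nu$, and its left and right marginals both equal $\mu_M:=2\mu_\mathrm{b}$, which is Radon by the very definition of the big jump part. Moreover $\nu=2J_\mathrm{b}\leq 2J$, so $\nu$ charges no $\EE$-polar set and the candidate jump measure $J-\tfrac12\nu=J_\mathrm{s}$ remains non-negative; these are precisely the conditions making $\nu$ a bivariate smooth measure in the sense of \S4 of \cite{Y96-2}. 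Thus Theorem~4.3 of \cite{Y96-2} supplies an MF $M\in\text{MF}(X)$ with $\nu_M=\nu=2J_\mathrm{b}$. As $\nu$ is symmetric with Radon marginal, $M$ is $m$-symmetric and $\mu_M$ is Radon, so $M\in\overset{\circ}{\text{MF}}(X)$, and by Lemma~\ref{LM22} the form $M(\EE,\FF)$ is a regular Dirichlet form on $L^2(E,m)$ admitting $\FF\cap C_c(E)$ as a special standard core.

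It then remains to identify the resurrected form of $M(\EE,\FF)$ with $(\EE^\mathrm{s},\FF^\mathrm{s})$. Substituting $\nu_M=2J_\mathrm{b}$ and $\mu_M=2\mu_\mathrm{b}$ into the displayed formula gives, for $u,v\in\FF\cap C_c(E)$,
\[
\EE^M(u,v)=\EE^{(c)}(u,v)+\int_{E\times E\setminus d}(u(x)-u(y))(v(x)-v(y))J_\mathrm{s}(dxdy)+2\int_E u(x)v(x)\mu_\mathrm{b}(dx),
\]
so the killing measure of $M(\EE,\FF)$ is $2\mu_\mathrm{b}$. By construction the resurrected Dirichlet form is the smallest closed extension of the form obtained by deleting this killing term on the core $\FF\cap C_c(E)$, and that form is precisely $(\EE^\mathrm{s},\FF\cap C_c(E))$. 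Since Lemma~\ref{LM32} guarantees $(\EE^\mathrm{s},\FF\cap C_c(E))$ is closable with smallest closed extension $(\EE^\mathrm{s},\FF^\mathrm{s})$, the two coincide, which is the assertion.

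I expect the main obstacle to be the second step: checking carefully that $2J_\mathrm{b}$ meets the hypotheses of the existence theorem for multiplicative functionals in \cite{Y96-2} — in particular that being supported off the diagonal, together with a Radon marginal, suffices to realise it as a genuine bivariate Revuz measure — and confirming that the resulting $M$ carries no continuous killing part, so that its marginal picks up only the jump contribution $2\mu_\mathrm{b}$ and no spurious diagonal term appears in the killing measure of $M(\EE,\FF)$.
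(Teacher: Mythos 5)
Your proposal is correct and follows essentially the same route as the paper: take $\nu_M=2J_\mathrm{b}$, invoke Theorem~4.3 of \cite{Y96-2} to produce $M$, use Lemma~\ref{LM22} to get regularity of $M(\EE,\FF)$ with core $\FF\cap C_c(E)$, compute via \eqref{EQA1EMU} that the jump part collapses to $J_\mathrm{s}$, and identify the resurrected form with $(\EE^\mathrm{s},\FF^\mathrm{s})$ through Theorem~5.2.17 of \cite{CF12} and Lemma~\ref{LM32}. The only difference is that you spell out the verification that $2J_\mathrm{b}$ is an admissible bivariate smooth measure in slightly more detail than the paper does.
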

\begin{proof}
We still set $\mathcal{C}:= \FF\cap C_c(E)$. Let $\nu:=2J_\mathrm{b}$. Since $J_\mathrm{b}\leq J$, it follows that $\nu$ is a bivariate smooth measure (see \S4 of \cite{Y96-2}). Thus from Theorem~4.3 of \cite{Y96-2}, we may deduce that there exists an MF $M\in \text{MF}(X)$ such that 
\[
	\nu_M=\nu,
\]
where $\nu_M$ is the bivariate Revuz measure of $M$. Moreover, from the definition of $J_\mathrm{b}$, we also have $M\in \overset{\circ}{\text{MF}}(X)$. Then it follows from Lemma~\ref{LM22} that $M(\EE,\FF)$ is a regular Dirichlet form and $\mathcal{C}$ is its special standard core. Furthermore,  \eqref{EQA1EMU} indicates that for any $u\in \mathcal{C}$, 
\[
\begin{aligned}
	\EE^M&(u,u)\\ &=\EE^{(c)}(u,u)+\int_{E\times E\setminus d} \big(u(x)-u(y)\big)^2(J-\frac{1}{2}\nu_M)(dxdy)+\int_E u(x)^2 \mu_M(dx) \\
		&= \EE^{(c)}(u,u)+\int_{E\times E\setminus d} \big(u(x)-u(y)\big)^2J_\mathrm{s}(dxdy)+\int_E u(x)^2 \mu_M(dx) \\
		&=\EE^\mathrm{s}(u,u)+\int_E u(x)^2 \mu_M(dx).
\end{aligned}\]
Denote the resurrected Dirichlet form of $M(\EE,\FF)$ by $\left(\EE^{M,\mathrm{res}}, \FF^{M,\mathrm{res}}\right)$.  It follows from Theorem~5.2.17 of \cite{CF12} that $\mathcal{C}$ is also a special standard core of $\left(\EE^{M,\mathrm{res}}, \FF^{M,\mathrm{res}}\right)$, and for any $u,v\in \mathcal{C}$, 
\[
	\EE^{M,\mathrm{res}}(u,v)=\EE^\mathrm{s}(u,v). 
\]
Therefore, from Lemma~\ref{LM32}, we can obtain that $\left(\EE^{M,\mathrm{res}}, \FF^{M,\mathrm{res}}\right)=(\EE^\mathrm{s},\FF^\mathrm{s})$. That completes the proof. 
\end{proof} 

The above theorem implies that the difference between $(\EE,\FF)$ and $(\EE^\mathrm{s},\FF^\mathrm{s})$ is a combination of killing transform induced by $M$ (or $\nu_M=2J_\mathrm{b}$) and resurrected transform. Denote all regular subspaces of $(\EE,\FF)$ and $(\EE^\mathrm{s},\FF^\mathrm{s})$ by $\mathscr{R}$ and $\mathscr{R}_\mathrm{s}$. Let $T_\mathrm{s}$ be the composite transform of killing transform induced by $2J_\mathrm{b}$ and resurrected transform. Then we may deduce that 
\[
	T_\mathrm{s}: \mathscr{R}\rightarrow \mathscr{R}_\mathrm{s}
\] 
is a bijection. The inverse mapping of $T_\mathrm{s}$ is actually the add-jump transform relative to $2J_\mathrm{b}$, which is outlined at the end of \S\ref{SEC2}. 

In a word, the big jump part of $J$ is not essential for the questions about regular subspaces of $(\EE,\FF)$ either. This fact is very similar to the discussions about the killing part of $(\EE,\FF)$.

\section{Examples: pure jump step processes}\label{SEC4}

In the end, we shall give some examples of pure big-jump processes to illustrate that they have no proper regular subspaces. 

We refer the general introduction to pure jump step processes to Chapter I \S12 of \cite{BG68}. Under symmetric settings, \S2.2.1 of \cite{CF12} constructed their associated Dirichlet forms. Let $E$ be a locally compact separable metric space, and $Q(x,dy)$ a probability kernel on $(E,\mathcal{B}(E))$, where $\mathcal{B}(E)$ is the Borel $\sigma$-algebra on $E$. Assume that $Q(x,\{x\})=0$ for every $x\in E$. Further set $\lambda(x)$ to be a Borel measurable function on $E$ such that $0<\lambda(x)<\infty$. Then $Q$ is called the road map of pure jump step process $X$, $\lambda$ is called its speed function, and $X$ is also written as $X^{\lambda, Q}$. We refer more details to \cite{BG68} and \cite{CF12}. 

We always assume that $X^{\lambda, Q}$ is symmetric. More precisely, assume that there exists a $\sigma$-finite measure $m_0$ fully supported on $E$ such that
\begin{equation}\label{QSYM}
    Q(x,dy)m_0(dx)=Q(y,dx)m_0(dy).
\end{equation}
Then $m_0$ is called a symmetrizing measure of $Q$. Let
\begin{equation}
    m(dx):=\frac{1}{\lambda(x)}m_0(dx),
\end{equation}
which is called the speed measure for $X^{\lambda,Q}$. Further assume that $m$ is Radon. Note that under above assumptions, $X^{\lambda,Q}$ is $m$-symmetric.

\subsection{Bounded speed function}\label{SEC41}

When $\lambda$ is bounded, one may easily check that (see also Theorem~2.2.3 of \cite{CF12}) the associated  Dirichlet form of $X$ on $L^2(E,m)$ is
\begin{equation}\label{BOUND}
\begin{aligned}
	\mathcal{F} & =L^2(E,m), \\
    \mathcal{E}(u,v) &=\frac{1}{2}\int_{E\times E}(u(x)-u(y))(v(x)-v(y))Q(x,dy)\lambda(x)m(dx) \\&\qquad \qquad  +\int_Eu(x)v(x)(1-Q(x,E))\lambda(x)m(dx), \quad u,v \in \FF. 
      \end{aligned}
    \end{equation}
Clearly $(\EE,\FF)$ is regular on $L^2(E,m)$. 

\begin{proposition}\label{AQS}
     Assume that $Q$ satisfies \eqref{QSYM}, and $\lambda$ is bounded. If $(\mathcal{E}',\mathcal{F}')$ is a regular subspace of $(\EE,\FF)$,  then $(\mathcal{E}',\mathcal{F}')=(\mathcal{E},\mathcal{F})$
\end{proposition}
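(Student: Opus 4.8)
The plan is to show that the hypotheses force the form norm $\mathcal{E}_1$ to be equivalent to the $L^2(E,m)$-norm on $\mathcal{F}=L^2(E,m)$, and then to exploit the fact that any regular subspace is simultaneously a dense and a closed subspace of $L^2(E,m)$ under this equivalent norm. This makes the structure theorem on jumping and killing measures unnecessary; the whole argument reduces to one quantitative estimate followed by a soft functional-analytic step.

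First I would establish a two-sided comparison. The lower bound $\mathcal{E}_1(u,u)\geq \|u\|_{L^2(m)}^2$ is trivial. For the upper bound I would use the boundedness of $\lambda$, set $L:=\|\lambda\|_\infty$, and split the jump integrand via $(u(x)-u(y))^2\leq 2u(x)^2+2u(y)^2$. The part carrying $u(x)^2$ is immediately dominated by $L\|u\|_{L^2(m)}^2$ because $Q(x,E)\leq 1$ and $\lambda\,m=m_0$. The part carrying $u(y)^2$ is handled by invoking the symmetry \eqref{QSYM}: since $Q(x,dy)m_0(dx)$ is a symmetric measure on $E\times E$, one has $\int\int u(y)^2 Q(x,dy)m_0(dx)=\int\int u(x)^2 Q(x,dy)m_0(dx)\leq L\|u\|_{L^2(m)}^2$. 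The killing term $\int u(x)^2(1-Q(x,E))\lambda\,dm$ is likewise at most $L\|u\|_{L^2(m)}^2$. Summing gives $\mathcal{E}(u,u)\leq 3L\|u\|_{L^2(m)}^2$, so that $\mathcal{E}_1$ and the $L^2(E,m)$-norm are equivalent on $\mathcal{F}=L^2(E,m)$. The only delicate point is the symmetrization bookkeeping for the $u(y)^2$ term; the rest is routine.

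Next, let $(\mathcal{E}',\mathcal{F}')$ be a regular subspace. Since $\mathcal{F}'\subset\mathcal{F}$ and $\mathcal{E}'=\mathcal{E}$ on $\mathcal{F}'$, the norm $\mathcal{E}'_1$ on $\mathcal{F}'$ is exactly the restriction of $\mathcal{E}_1$, hence equivalent to the $L^2(E,m)$-norm by the previous step. Because $(\mathcal{E}',\mathcal{F}')$ is a Dirichlet form, $\mathcal{F}'$ is complete for $\mathcal{E}'_1$; by the norm equivalence it is therefore complete, and thus closed, in $L^2(E,m)$.

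Finally, every Dirichlet form is densely defined, so $\mathcal{F}'$ is dense in $L^2(E,m)$. A subspace that is at once dense and closed in $L^2(E,m)$ must coincide with the whole space, whence $\mathcal{F}'=L^2(E,m)=\mathcal{F}$. As $\mathcal{E}'=\mathcal{E}$ on this common domain, we conclude $(\mathcal{E}',\mathcal{F}')=(\mathcal{E},\mathcal{F})$. The main obstacle is precisely the norm-equivalence estimate of the first step, which crucially uses both the boundedness of $\lambda$ (so that $\mathcal{F}=L^2(E,m)$ and the form is $L^2$-bounded) and the symmetry \eqref{QSYM}; once it is in hand the remainder is immediate.
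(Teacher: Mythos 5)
Your proof is correct and follows essentially the same route as the paper: the heart of both arguments is the bound $\mathcal{E}(u,u)\le C\|u\|_{L^2(E,m)}^2$ obtained from $(u(x)-u(y))^2\le 2\left(u(x)^2+u(y)^2\right)$ together with the symmetry \eqref{QSYM}, yielding equivalence of $\|\cdot\|_{\mathcal{E}_1}$ with the $L^2(E,m)$-norm on $\mathcal{F}=L^2(E,m)$. The only (minor) difference is the closing step: the paper deduces $\mathcal{F}'=\mathcal{F}$ from the uniform-norm density of $\mathcal{F}'\cap C_c(E)$ in $C_c(E)$, whereas you note that $\mathcal{F}'$ is dense in $L^2(E,m)$ and, being complete under the equivalent norm, also closed --- a slightly softer but equally valid finish.
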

\begin{proof}
    Let $M$ be a positive number such that $|\lambda(x)|<M$ for any $x\in E$. It follows from the symmetry of $Q$ and $ \left(u(x)-u(y)\right)^2 \leq 2\left(u(x)^2+u(y)^2\right)$ that for any $u\in \FF$,
    \[
     \begin{aligned}
        \mathcal{E}&(u,u)\\&\leq \int_{E\times E}\left(u(x)^2+u(y)^2\right)Q(x,dy)\lambda(x)m(dx) +\int_Eu(x)^2(1-Q(x,E))\lambda(x)m(dx)\\
                        &= 2\int_{E\times E}u(x)^2Q(x,dy)\lambda(x)m(dx) +\int_Eu(x)^2(1-Q(x,E))\lambda(x)m(dx)\\
                        &\leq 2M\int_Eu(x)^2m(dx).
    \end{aligned}\]
 That indicates that the norm $\|\cdot\|_{\mathcal{E}_1}$ is equivalent to the $L^2(E,m)$-norm on $\FF$. Since $\mathcal{F}'\cap C_c(E)$ is dense in $C_c(E)$ with uniform norm, we may also deduce that $\FF'\cap C_c(E)$ is dense in $\FF$ with the norm $\|\cdot\|_{\EE_1}$. Particularly, we have $(\EE',\FF')=(\EE,\FF)$. That completes the proof. 
\end{proof}

Now, let us consider the examples on Euclidean space. More precisely,  $E=\mathbf{R}^k$, $Q$ is a spatial homogeneous and conservative Markov kernel, i.e. $Q(x,dy)=Q(x-z,dy-z)$ and $Q(z,\mathbf{R}^k)=1$ for any $z\in \mathbf{R}^k$. Further assume that  $\lambda(x)$ is a constant function, i.e. $\lambda(x)\equiv \lambda_0$ for some constant $\lambda_0$. Then $X^{\lambda,Q}$ is exactly a compound Poisson process on $\mathbf{R}^k$ with parameter $\lambda_0$ and probability distribution $Q$. In other words, 
\[
	X^{\lambda, Q}_t=x+\sum_{n=1}^{N_t}\xi_n,\quad t\geq 0, x\in \mathbf{R}^k,
\]
where $x$ is the starting point, $N$ is the standard Poisson process with parameter $\lambda_0$, and $\{\xi_n:n\geq 1\}$ is a sequence of i.i.d random variables who share the common law $Q(0, \cdot)$. From Proposition~\ref{AQS}, we can directly obtain the following corollary.

\begin{corollary}\label{ASC}
    Any symmetric compound Poisson process has no proper regular subspaces.
\end{corollary}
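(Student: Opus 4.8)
The plan is to exhibit a symmetric compound Poisson process as a special instance of the pure jump step process $X^{\lambda,Q}$ to which Proposition~\ref{AQS} applies, and then read off the conclusion directly. First I would fix the data of such a process on $E=\mathbf{R}^k$: its intensity parameter $\lambda_0>0$ and the law $\mu:=Q(0,\cdot)$ of a single jump, which is a probability measure on $\mathbf{R}^k$ with $\mu(\{0\})=0$. Setting $\lambda(x)\equiv\lambda_0$ and defining the spatially homogeneous conservative kernel $Q(x,dy):=\mu(dy-x)$ recovers precisely the description recalled just before the corollary, so the process is of the form $X^{\lambda,Q}$. Since $\lambda$ is constant, it is in particular bounded, which disposes of one of the two hypotheses of Proposition~\ref{AQS}.

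The only substantive point is to verify the symmetrizing condition \eqref{QSYM}. I would take $m_0$ to be Lebesgue measure on $\mathbf{R}^k$ and check that, because $Q$ is spatially homogeneous, the identity $Q(x,dy)\,m_0(dx)=Q(y,dx)\,m_0(dy)$ reduces, after the change of variables $z=y-x$, to the requirement that $\mu$ be invariant under $z\mapsto -z$, that is, that the jump law be symmetric about the origin. This is exactly the meaning of a \emph{symmetric} compound Poisson process, so \eqref{QSYM} holds with $m_0=dx$. The associated speed measure $m(dx)=\lambda_0^{-1}\,dx$ is then a constant multiple of Lebesgue measure, hence a fully supported Radon measure, and $X^{\lambda,Q}$ is $m$-symmetric with the regular Dirichlet form \eqref{BOUND}.

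With both hypotheses of Proposition~\ref{AQS} in force, the proposition applies verbatim: every regular subspace $(\mathcal{E}',\mathcal{F}')$ of $(\mathcal{E},\mathcal{F})$ must equal $(\mathcal{E},\mathcal{F})$, which is the assertion that there are no proper regular subspaces. The main, and essentially the only, obstacle is the bookkeeping of the previous paragraph, namely confirming that the probabilistic notion of symmetry for a compound Poisson process coincides with the analytic condition \eqref{QSYM} and that Lebesgue measure is the correct symmetrizing measure $m_0$; once this translation is made, the corollary is immediate from Proposition~\ref{AQS}.
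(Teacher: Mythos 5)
Your proposal is correct and follows the same route as the paper: the paper likewise realizes the symmetric compound Poisson process as $X^{\lambda,Q}$ with constant speed function $\lambda_0$ and spatially homogeneous conservative road map $Q$, and deduces the corollary directly from Proposition~\ref{AQS}. Your explicit verification that Lebesgue measure serves as the symmetrizing measure $m_0$ precisely when the jump law is symmetric is a detail the paper leaves implicit, but it is the same argument.
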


\subsection{Locally integrable speed function}\label{SEC42}

Next, we assume that $\lambda \in L^1_{\text{\text{loc}}}(E,m)$. Note that this is equivalent to that $m_0$ is Radon.

Let $Z$ be the $m_0$-symmetric regular step process on $E$ with speed function $1$ and road map $Q$, where $m_0(dx)=\lambda(x)m(dx)$. Then  $X^{\lambda,Q}$ is a time change of $Z$ by $\tau_t:= \inf\{s:A_s>t\}$, where
\[
	A_s=\int_0^s\frac{1}{\lambda(Z_r)}dr, \quad s\geq 0.
\]
In other words, $X^{\lambda,Q}_t=Z_{\tau_t}$ for any $t\geq 0$. Denote the associated Dirichlet form of $Z$ and its extended Dirichlet space by $(\mathcal{E}^Z,\mathcal{F}^Z)$ and $\mathcal{F}^Z_e$. In \S\ref{SEC41}, we introduced that $\mathcal{F}^Z=L^2(E,m_0)$ and $\mathcal{E}^Z$ is given by \eqref{BOUND}. Moreover $(\mathcal{E}^Z,\mathcal{F}^Z)$ is regular. Note that $\mathcal{F}^Z_e$ is a linear subspace of $\mathcal{G}$ containing $L^2(E,m_0)$, where
\[
  \mathcal{G}= \left\{u: \int_{E\times E}(u(x)-u(y))^2Q(x,dy)m_0(dx) +\int_E u(x)^2(1-Q(x,E))m_0(dx)<\infty \right\}.
\]
In particular, $\mathcal{F}^Z_e=\mathcal{G}$ holds if and only if $Z$ is recurrent. It is known that the associated Dirichlet form of $X^{\lambda,Q}$ on $L^2(E,m)$  is
\begin{equation}\label{UNBOUND}
\begin{aligned}
        \mathcal{F} &= L^2(E,m)\cap \mathcal{F}^Z_e, \\
        \mathcal{E}(u,v) &=\frac{1}{2}\int_{E\times E}(u(x)-u(y))(v(x)-v(y))Q(x,dy)\lambda(x)m(dx) \\&\qquad \qquad +\int_Eu(x)v(x)(1-Q(x,E))\lambda(x)m(dx),\quad u,v\in \FF. 
\end{aligned}
    \end{equation}
Since \eqref{UNBOUND} is the time-changed Dirichlet form of  $(\mathcal{E}^Z,\mathcal{F}^Z)$ with respect to $m$, we may deduce that $(\EE,\FF)$ is regular on $L^2(E,m)$. Finally, we can also prove that $(\EE,\FF)$ has no proper regular subspaces. That is similar to Proposition~\ref{AQS}.

\end{document}